\journal{}
\def\ps@pprintTitle{%
 \let\@oddhead\@empty
 \let\@evenhead\@empty
 \def\@oddfoot{\centerline{\thepage}}%
 \let\@evenfoot\@oddfoot}
\DeclareRobustCommand{\brkbinom}{\genfrac\{\}{0pt}{}}
\newtheorem{theorem}{Theorem}
\newtheorem{lemma}{Lemma}
\newtheorem{remark}{Remark}
\makeatletter \@addtoreset{equation}{section} \makeatother
\newcommand{\N}{\mathbb{N}}
\newcommand{\R}{\mathbb{R}}
\newcommand{\EE}{\mathsf{E}} % Russian style, don't change
\newcommand{\bb}[1]{\boldsymbol{#1}}
\begin{document}

\begin{frontmatter}

\title{Central and noncentral moments of the \\multivariate hypergeometric distribution}%

\author[a1,a2]{Fr\'ed\'eric Ouimet}%

\address[a1]{Department of Mathematics and Statistics, McGill University, Montr\'eal (Qu\'ebec) Canada H3A 0B9}%

%\cortext[cor1]{Corresponding author}%
\ead{frederic.ouimet2@mcgill.ca}%

\begin{abstract}
In this short note, explicit formulas are developed for the central and noncentral moments of the multivariate hypergeometric distribution. A numerical implementation is provided in \texttt{Mathematica} for fast evaluations. This work complements the paper by \citet{Ouimet_2021_multinomial_moments}, where analogous formulas were derived and implemented in \texttt{Mathematica} for the multinomial distribution.
\end{abstract}

\begin{keyword}
hypergeometric distribution \sep moments \sep multinomial distribution \sep multivariate hypergeometric distribution \sep sampling without replacement
\MSC[2020]{Primary: 62E15 Secondary: 60E05, 62H10}
\end{keyword}

\end{frontmatter}

\section{Introduction}\label{sec:intro}

For any integer $d\in \N$, define the $d$-dimensional (unit) simplex by
\[
\mathcal{S}_d = \big\{\bb{x}\in [0,1]^d: \|\bb{x}\|_1 \leq 1\big\},
\]
where $\|\bb{x}\|_1 = \sum_{i=1}^d |x_i|$ denotes the $\ell^1$ norm on $\R^d$.
Given $d+1$ disjoint subpopulations of respective size $N_1,\ldots,N_{d+1}$, assume that $N_1 + \dots N_{d+1} = N$, where $N$ denotes the total population count. Consider the random vector $(\bb{X},X_{d+1}) \equiv (X_1,\ldots,X_d,X_{d+1})$ which represents the vector of subpopulation sample counts when sorting a random sample of size $X_1 + \dots + X_{d+1} \equiv n$, taken without replacement. In that case, the vector of the first $d$ components, $\bb{X} = (X_1,\ldots,X_d)$, is said to follow the $\mathrm{Hypergeometric}_d\hspace{0.2mm}(N,n,\bb{N})$ distribution, written $\bb{X}\sim \mathrm{Hypergeometric}_d\hspace{0.2mm}(N,n,\bb{N})$ for short, where $\bb{N} = (N_1,\ldots,N_d)$ and $N_{d+1} = N - \|\bb{N}\|_1$. The probability mass function of $\bb{X}$ is defined by
\begin{equation}\label{eq:hypergeometric.pmf}
P_{N,n,\bb{N}}(\bb{k}) = \binom{N}{n}^{-1} \prod_{i=1}^{d+1} \binom{N_i}{k_i}, \quad \bb{k}\in \mathbb{K}_d,
\end{equation}
see, e.g., \citet[Chapter~39]{MR1429617}, where $k_{d+1} = n - \|\bb{k}\|_1$, $n,N\in \N$ with $n \leq N$, and the support is
\[
\mathbb{K}_d = \big\{\bb{k}\in \N_0^d \cap n \mathcal{S}_d : k_i\in [0,N_i] ~ \text{for all } i\in \{1,\ldots,d\}\big\}.
\]
When $N = \infty$ (the sampling is done with replacement), the above becomes the multinomial distribution.

Alternatively, one can write $N_i \equiv N p_i$, or equivalently $p_i = N_i / N$, for some probabilities $p_1,\ldots,p_{d+1}$ satisfying $\bb{p} = (p_1,\ldots,p_d)\in \mathcal{S}_d$, where $p_{d+1} = 1 - \|\bb{p}\|_1$, so that the proportion of the $i$-th subpopulation with respect to the total population is represented by $p_i$. The parameter $p_i$ also corresponds to the probability that a specific sample unit be taken from the $i$-th subpopulation. Under this notation, the multivariate hypergeometric distribution is parametrized by $N,n,\bb{p}$ instead of $N,n,\bb{N}$.

\section{Preliminary result}

For any integer $\alpha\in \N_0$ and any real $x\in \R$, let $x^{(\alpha)} = x (x - 1) \dots (x - \alpha + 1)$ denote the falling factorial of order $\alpha$ for $x$, where $x^{(0)} = 1$ by convention. The lemma below derives the joint factorial moments of the multivariate hypergeometric distribution.

\begin{lemma}\label{lem:factorial.moments}
For all $\bb{\alpha}\in \mathbb{K}_d$, one has
\[
\EE\left(\prod_{i=1}^d X_i^{(\alpha_i)}\right) = \frac{n^{(\|\bb{\alpha}\|_1)}}{N^{(\|\bb{\alpha}\|_1)}} \prod_{i=1}^d N_i^{(\alpha_i)}.
\]
\end{lemma}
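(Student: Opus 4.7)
My plan is to compute the factorial moment directly from the definition of the expectation and reduce the resulting multi-index sum to a single Vandermonde-type convolution.

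First, I would write
\[
\EE\!\left(\prod_{i=1}^d X_i^{(\alpha_i)}\right) = \binom{N}{n}^{-1} \sum_{\bb{k}\in \mathbb{K}_d} \left(\prod_{i=1}^d k_i^{(\alpha_i)} \binom{N_i}{k_i}\right) \binom{N_{d+1}}{k_{d+1}},
\]
and apply the elementary identity $k_i^{(\alpha_i)} \binom{N_i}{k_i} = N_i^{(\alpha_i)} \binom{N_i - \alpha_i}{k_i - \alpha_i}$ for each $i\in\{1,\dots,d\}$. Note that the identity automatically kills the terms with $k_i < \alpha_i$ on both sides, which is convenient because the restriction $\bb{\alpha}\in \mathbb{K}_d$ guarantees $\alpha_i \leq N_i$, so the shifted binomials remain well defined.

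Next, I would pull the factor $\prod_{i=1}^d N_i^{(\alpha_i)}$ out of the sum and perform the change of variables $\ell_i = k_i - \alpha_i$ for $i\in\{1,\dots,d\}$ and $\ell_{d+1} = k_{d+1}$. The new summation index $\bb{\ell}$ ranges over nonnegative integer vectors with $\ell_i \leq N_i - \alpha_i$ and $\|\bb{\ell}\|_1 + \|\bb{\alpha}\|_1 = n$, so after applying the multivariate Vandermonde convolution
\[
\sum_{\ell_1+\cdots+\ell_{d+1} = n - \|\bb{\alpha}\|_1} \prod_{i=1}^d \binom{N_i - \alpha_i}{\ell_i} \binom{N_{d+1}}{\ell_{d+1}} = \binom{N - \|\bb{\alpha}\|_1}{n - \|\bb{\alpha}\|_1},
\]
the whole multi-sum collapses to a single binomial coefficient.

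Combining everything gives
\[
\EE\!\left(\prod_{i=1}^d X_i^{(\alpha_i)}\right) = \binom{N}{n}^{-1} \binom{N - \|\bb{\alpha}\|_1}{n - \|\bb{\alpha}\|_1} \prod_{i=1}^d N_i^{(\alpha_i)},
\]
and a short algebraic manipulation of factorials verifies $\binom{N}{n}^{-1}\binom{N - \|\bb{\alpha}\|_1}{n - \|\bb{\alpha}\|_1} = n^{(\|\bb{\alpha}\|_1)}/N^{(\|\bb{\alpha}\|_1)}$, which is the claimed identity. The only subtle step is the bookkeeping on the support of $\bb{\ell}$ after the shift; once it is checked that $\bb{\alpha}\in \mathbb{K}_d$ guarantees the shifted domain matches exactly the domain of Vandermonde's convolution, the rest is routine.
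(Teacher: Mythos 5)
Your proof is correct and follows essentially the same route as the paper: the same absorption identity $k_i^{(\alpha_i)}\binom{N_i}{k_i} = N_i^{(\alpha_i)}\binom{N_i-\alpha_i}{k_i-\alpha_i}$, the same shift of summation indices, and the same collapse of the residual sum. The only cosmetic difference is that you invoke the multivariate Vandermonde convolution explicitly, whereas the paper recognizes the residual sum as the total mass of a $\mathrm{Hypergeometric}_d(N-\|\bb{\alpha}\|_1,\, n-\|\bb{\alpha}\|_1,\, \bb{N}-\bb{\alpha})$ distribution, which equals $1$; these are the same identity.
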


\begin{remark}
This result can be found without proof for example in Eq.~(39.6) of \citet{MR1429617}; the proof is included here for completeness.
\end{remark}

\begin{remark}
Using the alternative parametrization $N_i \equiv N p_i$, one can write
\[
\EE\left(\prod_{i=1}^d X_i^{(\alpha_i)}\right) = \frac{\prod_{i=1}^d (N p_i)^{(\alpha_i)}}{N^{(\|\bb{\alpha}\|_1)} \prod_{i=1}^d p_i^{\alpha_i}} \times n^{(\|\bb{\alpha}\|_1)} \prod_{i=1}^d p_i^{\alpha_i}.
\]
The second factor on the right-hand side is equal to the corresponding multinomial moment ($N = \infty$), as calculated by \citet{MR143299}. The first factor on the right-hand side represents the correction that accounts for the fact that the samples come without replacement from a finite population.
\end{remark}

\begin{proof}[Proof of Lemma~\ref{lem:factorial.moments}]
For all $\bb{\alpha}\in \mathbb{K}_d$, define
\[
\mathbb{K}_d[\bb{\alpha}] = \big\{\bb{k}\in \N_0^d \cap n \mathcal{S}_d : k_i\in [\alpha_i,N_i] ~ \text{for all } i\in \{1,\ldots,d\}\big\},
\]
then one has
\[
\begin{aligned}
\EE\left(\prod_{i=1}^d X_i^{(\alpha_i)}\right)
&= \sum_{\bb{k}\in \mathbb{K}_d} \left(\prod_{i=1}^d k_i^{(\alpha_i)}\right) P_{N,n,\bb{\alpha}}(\bb{k}) \\
&= \sum_{\bb{k}\in \mathbb{K}_d[\bb{\alpha}]} \left(\prod_{i=1}^d k_i^{(\alpha_i)}\right) \binom{N}{n}^{-1} \binom{N_{d+1}}{k_{d+1}} \prod_{i=1}^d \binom{N_i}{k_i} \\
&= \sum_{\bb{k}\in \mathbb{K}_d[\bb{\alpha}]} \binom{N}{n}^{-1} \binom{N_{d+1}}{k_{d+1}} \prod_{i=1}^d \left\{N_i^{(\alpha_i)} \binom{N_i - \alpha_i}{k_i - \alpha_i}\right\}.
\end{aligned}
\]
Now, for all $\bb{\alpha}\in \mathbb{K}_d$, let
\[
\mathbb{J}_d[\bb{\alpha}] = \big\{\bb{j}\in \N_0^d \cap (n - \|\bb{\alpha}\|_1) \mathcal{S}_d : j_i\in [0,N_i - \alpha_i] ~ \text{for all } i\in \{1,\ldots,d\}\big\},
\]
and $j_{d+1} = (n - \|\bb{\alpha}\|_1) - \|\bb{j}\|_1$ for any $\bb{j}\in \mathbb{J}_d[\bb{\alpha}]$. The above expectation can be rewritten as
\[
\begin{aligned}
\EE\left(\prod_{i=1}^d X_i^{(\alpha_i)}\right)
&= \sum_{\bb{j}\in \mathbb{J}_d[\bb{\alpha}]} \frac{n^{(\|\bb{\alpha}\|_1)}}{N^{(\|\bb{\alpha}\|_1)}} \binom{N - \|\bb{\alpha}\|_1}{n - \|\bb{\alpha}\|_1}^{-1} \binom{N_{d+1}}{j_{d+1}} \prod_{i=1}^d \left\{N_i^{(\alpha_i)} \binom{N_i - \alpha_i}{j_i}\right\} \\
&= \frac{n^{(\|\bb{\alpha}\|_1)}}{N^{(\|\bb{\alpha}\|_1)}} \prod_{i=1}^d N_i^{(\alpha_i)} \times \sum_{\bb{j}\in \mathbb{J}_d[\bb{\alpha}]} \binom{N - \|\bb{\alpha}\|_1}{n - \|\bb{\alpha}\|_1}^{-1} \binom{N_{d+1}}{j_{d+1}} \prod_{i=1}^d \binom{N_i - \alpha_i}{j_i} \\
&= \frac{n^{(\|\bb{\alpha}\|_1)}}{N^{(\|\bb{\alpha}\|_1)}} \prod_{i=1}^d N_i^{(\alpha_i)} \times 1.
\end{aligned}
\]
This concludes the proof.
\end{proof}

\section{Main results}\label{sec:main.results}

We begin by introducing a general formula of the noncentral moments of the multivariate hypergeometric distribution as shown in \eqref{eq:hypergeometric.pmf}.

\begin{theorem}[Noncentral moments]\label{thm:non.central.moments}
Let $\bb{X}\sim \mathrm{Hypergeometric}_d\hspace{0.2mm}(N,n,\bb{N})$. Then, for all $\bb{\alpha}\in \mathbb{K}_d$, one has
\[
\begin{aligned}
\EE\left(\prod_{i=1}^d X_i^{\alpha_i}\right)
&= \sum_{k_1=0}^{\alpha_1} \dots \sum_{k_d=0}^{\alpha_d} \frac{n^{(\|\bb{k}\|_1)}}{N^{(\|\bb{k}\|_1)}} \prod_{i=1}^d \brkbinom{\alpha_i}{k_i} N_i^{(k_i)} \\
&= \sum_{k_1=0}^{\alpha_1} \dots \sum_{k_d=0}^{\alpha_d} \left\{\frac{\prod_{i=1}^d (N p_i)^{(k_i)}}{N^{(\|\bb{k}\|_1)} \prod_{i=1}^d p_i^{k_i}}\right\} n^{(\|\bb{k}\|_1)} \prod_{i=1}^d \brkbinom{\alpha_i}{k_i} p_i^{k_i},
\end{aligned}
\]
where $\brkbinom{\alpha}{k}$ represents a Stirling number of the second kind, which is defined as the number of ways to divide a set of $p$ objects into $k$ non-empty subsets.
\end{theorem}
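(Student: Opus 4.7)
The plan is to reduce the problem to Lemma~\ref{lem:factorial.moments} via the classical identity that expresses ordinary powers in terms of falling factorials. Specifically, for every nonnegative integer $\alpha$ and every $x\in \R$, the Stirling numbers of the second kind satisfy
\[
x^{\alpha} \;=\; \sum_{k=0}^{\alpha} \brkbinom{\alpha}{k}\, x^{(k)},
\]
which is a standard combinatorial identity (see, for instance, the usual reference on Stirling numbers). I would start by applying this identity separately to each factor $X_i^{\alpha_i}$ in the product $\prod_{i=1}^d X_i^{\alpha_i}$, and then multiply everything out.

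Distributing the $d$ single-index sums produces a multiple sum over $\bb{k}$ with $0\leq k_i\leq \alpha_i$:
\[
\prod_{i=1}^d X_i^{\alpha_i}
\;=\; \sum_{k_1=0}^{\alpha_1} \dots \sum_{k_d=0}^{\alpha_d} \left(\prod_{i=1}^d \brkbinom{\alpha_i}{k_i}\right) \prod_{i=1}^d X_i^{(k_i)}.
\]
I would then take expectation termwise (the sum is finite, so linearity is immediate) and invoke Lemma~\ref{lem:factorial.moments} on each joint factorial moment $\EE\bigl(\prod_{i=1}^d X_i^{(k_i)}\bigr)$. This directly yields the first displayed formula of the theorem.

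One small point that I would verify explicitly is that Lemma~\ref{lem:factorial.moments} is applicable for every index $\bb{k}$ appearing in the sum, i.e.\ that $\bb{k}\in \mathbb{K}_d$. Since $\bb{\alpha}\in \mathbb{K}_d$ by hypothesis, one has $k_i\leq \alpha_i\leq N_i$ for every $i$ and $\|\bb{k}\|_1\leq \|\bb{\alpha}\|_1\leq n$, so the constraint is automatic. (Even if one dropped this check, any out-of-range term would vanish because either $N_i^{(k_i)}=0$ or $n^{(\|\bb{k}\|_1)}=0$.)

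Finally, the second form of the theorem follows by the purely algebraic substitution $N_i = N p_i$ together with multiplying and dividing the summand by $\prod_{i=1}^d p_i^{k_i}$, mirroring the rewriting already done in the remark after Lemma~\ref{lem:factorial.moments}. I do not anticipate a genuine obstacle: the whole argument is an application of the Stirling power-to-falling-factorial expansion combined with the lemma, so the ``hard part,'' if any, is simply the bookkeeping of indices in the multiple sum.
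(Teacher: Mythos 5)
Your proposal is correct and follows essentially the same route as the paper: expand each $X_i^{\alpha_i}$ via the Stirling-number identity $x^{\alpha} = \sum_{k} \brkbinom{\alpha}{k} x^{(k)}$, take expectations termwise, and apply Lemma~\ref{lem:factorial.moments}, with the second form obtained by the substitution $N_i = N p_i$. Your added check that each multi-index $\bb{k}$ in the sum lies in $\mathbb{K}_d$ (or that out-of-range terms vanish) is a small point the paper leaves implicit, but it does not change the argument.
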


\begin{proof}
We have the following well-known relation between the power $\alpha\in \N_0$ of a real $x\in \R$ and its falling factorials:
\[
x^{\alpha} = \sum_{k=0}^p \brkbinom{\alpha}{k} \, x^{(k)}.
\]
See, e.g., \citet[p.~262]{MR1397498}. Apply this identity to every factor $X_i^{\alpha_i}$ in the expectation to obtain
\[
\EE\left(\prod_{i=1}^d X_i^{\alpha_i}\right) = \sum_{k_1=0}^{\alpha_1} \dots \sum_{k_d=0}^{\alpha_d} \brkbinom{\alpha_1}{k_1} \dots \brkbinom{\alpha_d}{k_d} \, \EE\left(\prod_{i=1}^d X_i^{(k_i)}\right),
\]
The conclusion follows from Lemma~\ref{lem:factorial.moments}.
\end{proof}

We are now in a position to establish a general formula for the central moments of the multivariate hypergeometric distribution.

\begin{theorem}[Central moments]\label{thm:central.moments}
Let $\bb{X}\sim \mathrm{Hypergeometric}_d\hspace{0.2mm}(N,n,\bb{N})$. Then, for all $\bb{\alpha}\in \mathbb{K}_d$, one has
\[
\begin{aligned}
\EE\left\{\prod_{i=1}^d (X_i - \EE[X_i])^{\alpha_i}\right\}
&= \sum_{\ell_1=0}^{\alpha_1} \dots \sum_{\ell_d=0}^{\alpha_d} \sum_{k_1=0}^{\ell_1} \dots \sum_{k_d=0}^{\ell_d} \frac{n^{(\|\bb{k}\|_1)}}{N^{(\|\bb{k}\|_1)}} \left(\frac{-n}{N}\right)^{\sum_{i=1}^d (\alpha_i - \ell_i)} \prod_{i=1}^d \binom{\alpha_i}{\ell_i} \brkbinom{\ell_i}{k_i} N_i^{\alpha_i - \ell_i} N_i^{(k_i)} \\
&= \sum_{\ell_1=0}^{\alpha_1} \dots \sum_{\ell_d=0}^{\alpha_d} \sum_{k_1=0}^{\ell_1} \dots \sum_{k_d=0}^{\ell_d} \left\{\frac{\prod_{i=1}^d (N p_i)^{(k_i)}}{N^{(\|\bb{k}\|_1)} \prod_{i=1}^d p_i^{k_i}}\right\} n^{(\|\bb{k}\|_1)} (-n)^{\sum_{i=1}^d (\alpha_i - \ell_i)} \prod_{i=1}^d \binom{\alpha_i}{\ell_i} \brkbinom{\alpha_i}{k_i} p_i^{\alpha_i - \ell_i + k_i},
\end{aligned}
\]
where $\binom{\alpha}{\ell}$ refers to the binomial coefficient given by $\frac{\alpha!}{\ell! (\alpha - \ell)!}$.
\end{theorem}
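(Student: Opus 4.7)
The plan is to reduce the central moment to noncentral moments via the binomial theorem, and then apply Theorem~\ref{thm:non.central.moments}. First, I would note that Lemma~\ref{lem:factorial.moments} specialized to $\bb{\alpha} = \bb{e}_i$ (the $i$-th standard basis vector) yields $\EE[X_i] = n N_i / N$, which can also be written as $n p_i$ under the alternative parametrization.

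Next, I would expand each factor $(X_i - \EE[X_i])^{\alpha_i}$ using the binomial theorem, namely
\[
(X_i - n N_i/N)^{\alpha_i} = \sum_{\ell_i=0}^{\alpha_i} \binom{\alpha_i}{\ell_i} (-n N_i/N)^{\alpha_i - \ell_i} X_i^{\ell_i},
\]
and then multiply the $d$ expansions together and interchange product with sums to obtain
\[
\prod_{i=1}^d (X_i - \EE[X_i])^{\alpha_i} = \sum_{\ell_1=0}^{\alpha_1}\dots \sum_{\ell_d=0}^{\alpha_d} \left(\frac{-n}{N}\right)^{\sum_{i=1}^d (\alpha_i - \ell_i)} \prod_{i=1}^d \binom{\alpha_i}{\ell_i} N_i^{\alpha_i - \ell_i} \prod_{i=1}^d X_i^{\ell_i}.
\]
Taking expectations on both sides, the only random piece is $\prod_{i=1}^d X_i^{\ell_i}$, whose expectation is furnished by Theorem~\ref{thm:non.central.moments} (with $\bb{\alpha}$ replaced by $\bb{\ell}$). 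Substituting that expression produces the inner sum over $\bb{k}$ with the factor $\frac{n^{(\|\bb{k}\|_1)}}{N^{(\|\bb{k}\|_1)}} \prod_i \brkbinom{\ell_i}{k_i} N_i^{(k_i)}$, which yields the first displayed formula directly.

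For the second displayed formula, I would simply substitute $N_i = N p_i$ throughout, isolating the factor $\frac{\prod_{i=1}^d (N p_i)^{(k_i)}}{N^{(\|\bb{k}\|_1)} \prod_{i=1}^d p_i^{k_i}}$ exactly as in the corresponding step of Theorem~\ref{thm:non.central.moments}, and using the identity $(-n/N)^{\sum_i(\alpha_i - \ell_i)} \prod_i N_i^{\alpha_i - \ell_i} = (-n)^{\sum_i(\alpha_i-\ell_i)} \prod_i p_i^{\alpha_i - \ell_i}$ to absorb the $N^{-1}$ factors into the $p_i$'s.

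No step is genuinely difficult; the work is entirely mechanical. The main obstacle is bookkeeping: keeping the two layers of indices $\bb{\ell}$ and $\bb{k}$ straight, correctly distributing the exponents between $N_i$ and $N$ (or equivalently $p_i$), and being careful about sign conventions in $(-n/N)^{\sum_i (\alpha_i - \ell_i)}$. Once the binomial expansion is written out cleanly and Theorem~\ref{thm:non.central.moments} is substituted, both displayed identities should fall out by inspection.
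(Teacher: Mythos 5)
Your proposal is correct and follows essentially the same route as the paper: expand each factor $(X_i - \EE[X_i])^{\alpha_i}$ with the binomial theorem using $\EE[X_i] = nN_i/N$, take expectations, and substitute the noncentral moment formula of Theorem~\ref{thm:non.central.moments} with $\bb{\ell}$ in place of $\bb{\alpha}$. The reparametrization $N_i = Np_i$ for the second display is likewise handled the same way as in the paper.
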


\begin{proof}
Using the binomial theorem on each factor $(X_i - \EE[X_i])^{\alpha_i}$ in the expectation, and bearing in mind that $\EE[X_i] = n N_i / N$ for all $i\in \{1,\ldots,d\}$, observe that
\[
\EE\left\{\prod_{i=1}^d (X_i - \EE[X_i])^{\alpha_i}\right\} = \sum_{\ell_1=0}^{\alpha_1} \dots \sum_{\ell_d=0}^{\alpha_d} \EE\left(\prod_{i=1}^d X_i^{\ell_i}\right) \cdot \prod_{i=1}^d \binom{\alpha_i}{\ell_i} \left(\frac{- n N_i}{N}\right)^{\alpha_i - \ell_i}.
\]
We arrive at the conclusion by applying Theorem~\ref{thm:non.central.moments}.
\end{proof}

\section{Numerical implementation}

The formulas in Theorem~\ref{thm:non.central.moments} and Theorem~\ref{thm:central.moments} can be implemented in \texttt{Mathematica} as follows:
\begin{verbatim}
        NonCentral[\[CapitalNu]_, n_, Nvec_, \[Alpha]_, d_] :=
          Sum[FactorialPower[n, Sum[k[i], {i, 1, d}]] /
            FactorialPower[\[CapitalNu], Sum[k[i], {i, 1, d}]] *
              Product[StirlingS2[\[Alpha][[i]], k[i]] *
                FactorialPower[Nvec[[i]], k[i]], {i, 1, d}], ##]
                  & @@ ({k[#], 0, \[Alpha][[#]]} & /@ Range[d]);

        Central[\[CapitalNu]_, n_, Nvec_, \[Alpha]_, d_] :=
          Sum[Sum[FactorialPower[n, Sum[k[i], {i, 1, d}]] /
            FactorialPower[\[CapitalNu],
              Sum[k[i], {i, 1, d}]]*(-n / \[CapitalNu]) ^
                Sum[\[Alpha][[i]] - ell[i], {i, 1, d}] *
                  Product[Binomial[\[Alpha][[i]], ell[i]] *
                    StirlingS2[ell[i], k[i]] *
                      Nvec[[i]]^(\[Alpha][[i]] - ell[i]) *
                        FactorialPower[Nvec[[i]], k[i]], {i, 1, d}], ##]
                          & @@ ({k[#], 0, ell[#]} & /@ Range[d]), ##]
                            & @@ ({ell[#], 0, \[Alpha][[#]]} & /@ Range[d]);
\end{verbatim}

\section*{Funding}

Ouimet's funding was made possible through a contribution to Christian Genest's research program from the Trottier Institute for Science and Public Policy.

\phantomsection
\addcontentsline{toc}{chapter}{References}

\bibliographystyle{authordate1}
\bibliography{Ouimet_2024_moments_hypergeometric_bib}

\end{document}